\newcommand{\comment}[1]{}
\newcommand{\Z}{\mathbf{Z}}
\newcommand{\alg}{\mathrm{alg}}
\DeclareMathOperator{\Sym}{Sym}
\DeclareMathOperator{\Hom}{Hom}
\DeclareMathOperator{\Aut}{Aut}
\DeclareMathOperator{\Norm}{Norm}
\DeclareMathOperator{\Map}{Map}
\DeclareMathOperator{\Span}{Span}
\theoremstyle{plain}
\newtheorem{theorem}{Theorem}[section]
\newtheorem{lemma}[theorem]{Lemma}
\newtheorem{proposition}[theorem]{Proposition}
\theoremstyle{definition}
\newtheorem{remark}[theorem]{Remark}
\newtheorem{example}[theorem]{Example}
\begin{document}

\title{Polynomial maps on vector spaces over a finite field}
\author{Michiel Kosters}
\address{Mathematisch Instituut
P.O. Box 9512
2300 RA Leiden
the Netherlands}
\email{mkosters@math.leidenuniv.nl}
\urladdr{www.math.leidenuniv.nl/~mkosters}
\date{\today}
\thanks{This article is part of my PhD thesis written under the supervision of Hendrik Lenstra.}
\subjclass[2010]{11T06}

\begin{abstract}
Let $l$ be a finite field of cardinality $q$ and let $n$ be in $\Z_{\geq 1}$. Let $f_1,\ldots,f_n \in l[x_1,\ldots,x_n]$ not all constant and
consider the evaluation map $f=(f_1,\ldots,f_n) \colon l^n
\to l^n$. Set $\deg(f)=\max_i \deg(f_i)$. Assume that $l^n
\setminus f(l^n)$ is not empty. We will prove
\begin{align*}
 |l^n\setminus f(l^n)| \geq \frac{n(q-1)}{\deg(f)}.
\end{align*}
This improves previous known bounds.
\end{abstract}

\maketitle

\section{Introduction}

The main result of \cite{WAN3} is the following theorem.

\begin{theorem} \label{27c13}
Let $l$ be a finite field of cardinality $q$ and let $n$ be in $\Z_{\geq 1}$. Let $f_1,\ldots,f_n \in l[x_1,\ldots,x_n]$ not all constant and
consider the map $f=(f_1,\ldots,f_n) \colon l^n
\to l^n$. Set $\deg(f)=\max_i \deg(f_i)$. Assume that $l^n
\setminus f(l^n)$ is not empty. Then we have
\begin{align*}
 |l^n\setminus f(l^n)| \geq \min\left\{\frac{n(q-1)}{\deg(f)},q\right\}.
\end{align*}
\end{theorem}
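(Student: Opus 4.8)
The plan hinges on one elementary identity: since $\sum_{a\in l}a^{j}=0$ for $0\le j<q-1$, every $P\in l[x_{1},\dots,x_{n}]$ with $\deg P<n(q-1)$ has $\sum_{x\in l^{n}}P(x)=0$. Applying this with $P(x)=g(f_{1}(x),\dots,f_{n}(x))$ yields the key fact: if $g\in l[y_{1},\dots,y_{n}]$ satisfies $\deg(f)\cdot\deg(g)<n(q-1)$, then $\sum_{x\in l^{n}}g(f(x))=0$; equivalently, writing $N(c)=|f^{-1}(c)|$, every moment $\sum_{c\in l^{n}}N(c)\,g(c)$ with $\deg g<n(q-1)/\deg(f)$ vanishes in $l$, and $l^{n}\setminus f(l^{n})=\{c:N(c)=0\}$. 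First I would dispose of the easy cases: if $\deg(f)=1$, or if some $f_{i}$ is constant, then a non-surjective $f$ has image inside an affine subset of size at most $q^{n-1}$, whence $|l^{n}\setminus f(l^{n})|\ge q^{n-1}(q-1)\ge\min\{n(q-1),q\}$; so from now on $\deg(f)\ge 2$ and each $f_{i}$ is nonconstant. The strategy is then to show that a fibre-count $N$ whose low-degree moments all match those of the constant function $1$ (the permutation case) must vanish at many points unless $f$ is surjective.

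For $n=1$ this can be carried through, and I would treat it first as the model. Put $P(Y)=\prod_{a\in l}(Y-f(a))=\prod_{v\in f(l)}(Y-v)^{N(v)}\in l[Y]$, monic of degree $q$. The vanishing of the power sums $\sum_{a}f(a)^{k}$ for $1\le k<(q-1)/\deg(f)$ determines, through Newton's identities, the leading coefficients of $P$: up to a correction coming from characteristic $p$, the $\lceil(q-1)/\deg(f)\rceil$ top coefficients of $P$ coincide with those of $Y^{q}$. Now $Y^{q}-Y=\prod_{v\in l}(Y-v)=L(Y)R(Y)$, where $L=\prod_{v\in f(l)}(Y-v)$ is the radical of $P$ and $R=\prod_{w\notin f(l)}(Y-w)$ has degree $u:=|l\setminus f(l)|$, while $P=L\cdot M$ with $M=\prod_{v\in f(l)}(Y-v)^{N(v)-1}$ also monic of degree $u$. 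Cancelling $L$ from $R(Y)P(Y)=M(Y)(Y^{q}-Y)$ and matching the $\lceil(q-1)/\deg(f)\rceil$ leading coefficients on the two sides forces $R$ and $M$ to agree in that many coefficients; if $u<(q-1)/\deg(f)$ this gives $R=M$, and since $R$ and $M$ have disjoint root sets this forces $R=M=1$, i.e.\ $u=0$, contradicting $f(l)\ne l$. Hence $u\ge\lceil(q-1)/\deg(f)\rceil\ge(q-1)/\deg(f)$, with the "$\min$ with $q$" vacuous since always $u\le q-1$. The one delicate point here is that Newton's identities degenerate at multiples of $p$; I would handle it by observing that for small $k$ the reduced form of $f(x)^{k}$ modulo $x^{q}-x$ has degree $\le q-2$, so many more mixed moments vanish, or else by the $p$-adic device of the next paragraph.

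The hard part is $n\ge 2$, where no factorisation $\prod_{a}(Y-f(a))$ is available. Here I would work in the ring $R_{n}=l[y_{1},\dots,y_{n}]/(y_{1}^{q}-y_{1},\dots,y_{n}^{q}-y_{n})$ of $l$-valued functions on $l^{n}$: the pullback $f^{*}\colon R_{n}\to R_{n}$, $\phi\mapsto\phi\circ f$, is injective precisely when $f$ is surjective, and always $\dim_{l}\ker f^{*}=|l^{n}\setminus f(l^{n})|$, so the aim is a lower bound on this dimension. The input is the vanishing of all moments $\sum_{c}N(c)g(c)$ of total degree $<n(q-1)/\deg(f)$ — a rigidity on $N$ that a permutation also satisfies — and the job is to convert it into the dimension bound. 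The real obstacle, and I believe the reason the statement reads $\min\{n(q-1)/\deg(f),q\}$ rather than $n(q-1)/\deg(f)$, is that $N(c)$ enters these moment identities only modulo $p$, so the direct argument loses information. I expect the resolution to be a $p$-adic lifting argument in the spirit of Wan: lift $f$ to a polynomial map over the Witt vectors $W(l)$, express $N(c)$ as a $p$-adic exponential sum whose valuation is governed by $\deg(f)$, and extract a cardinality estimate. Carrying out the lift, the valuation bounds, and the combinatorial conclusion — producing exactly $\min\{n(q-1)/\deg(f),q\}$ — is where essentially all of the work lies.
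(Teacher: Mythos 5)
The decisive gap is the case $n\ge 2$. Your proposal handles it only by predicting that ``a $p$-adic lifting argument in the spirit of Wan'' will work, and you yourself note that this is ``where essentially all of the work lies''; that is precisely the method of the cited paper of Mullen--Wan--Wang, and it is not carried out here, so nothing beyond the (sketched) case $n=1$ is actually proved. The paper closes this gap in a completely different and elementary way: it endows $l^n$ with the structure of the field $k$ with $[k:l]=n$, so that $f$ becomes a map $k\to k$ induced by a single polynomial $f_0\in k[x]$ of degree $<q^n$, and it measures $f$ not by $\deg f_0$ but by the $l$-degree of the function, which Proposition \ref{27c33} identifies as the maximum base-$q$ digit sum $s_q(s_1)+\cdots+s_q(s_r)$ over the monomials occurring, and which is at most $\deg(f)=\max_i\deg f_i$. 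The one-variable Turnwald-type argument is then run over $k$: by Lemma \ref{27c67} (if $q^n-1\mid m$ then $s_q(m)\ge n(q-1)$) together with the digit-sum bound, the elementary symmetric functions of the values $f(a)$, $a\in k$, vanish in the range $i<n(q-1)/\deg_l(f)$, and comparing $\prod_{a\in k\setminus f(k)}(1-aT)$ with $\prod_{b\in f(k)}(1-bT)^{\#f^{-1}(b)-1}$ gives the bound. Note that this yields $n(q-1)/\deg(f)$ with no $\min\{\cdot,q\}$ (Theorem \ref{27c18}), so your diagnosis that the truncation at $q$ reflects an unavoidable loss of information from working modulo $p$ is also not right: the stronger bound is obtained purely in characteristic $p$.

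Even in the case $n=1$, where your outline is close in spirit to the paper's, the degeneration of Newton's identities at indices divisible by $p$ is a genuine obstruction that you address only with a vague remark about ``many more mixed moments'' or a deferral to the unproved $p$-adic device. The paper avoids Newton's identities altogether: by Lemma \ref{27c89}, the $i$-th elementary symmetric function $g_i$ of the $f_0(X_a)$ is invariant under the $k^*$-action $X_a\mapsto X_{ca}$ and, by the digit-sum estimates, has no monomial of degree divisible by $q^n-1$, hence vanishes at the tautological point $(a)_{a\in k}$; this gives the coefficient vanishing directly. So while your moment identity and your easy-case reductions are fine, the proposal as written does not constitute a proof of the stated theorem.
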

We refer to \cite{WAN3} for a nice introduction to this problem including references and historical remarks. The proof in \cite{WAN3} relies on $p$-adic liftings of such polynomial maps. We give a proof of a stronger statement using different techniques.

\begin{theorem} \label{27c14}
 Under the assumptions of Theorem \ref{27c13} we have
\begin{align*}
 |l^n\setminus f(l^n)| \geq \frac{n(q-1)}{\deg(f)}.
\end{align*}
\end{theorem}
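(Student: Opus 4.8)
\medskip

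The plan is to reduce the bound to a statement about a single polynomial function and then exploit the structure of the set of missed values. Write $V = l^n \setminus f(l^n)$ and suppose $V \neq \emptyset$. The key idea is to produce, for each coordinate direction, a polynomial identity that constrains how the image can fail to be surjective, and to combine these constraints using the degree hypothesis $\deg(f) = d$.

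\medskip

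First I would pass to the ``characteristic function of the image'' type argument. For $a \in l^n$ the value $a$ is \emph{not} hit by $f$ precisely when the system $f_i(x) = a_i$ ($1 \le i \le n$) has no solution in $l^n$. Using that for a finite field $l$ the function $x \mapsto 1 - (x)^{q-1}$ is the indicator of $\{0\}$, one gets that $a \notin f(l^n)$ forces
\begin{align*}
 \sum_{x \in l^n} \prod_{i=1}^n \bigl(1 - (f_i(x) - a_i)^{q-1}\bigr) = 0
\end{align*}
in $l$. Expanding the product, this is a polynomial relation among the $a_i$ of controlled degree: each factor contributes degree at most $d(q-1)$ in $x$, but crucially the total degree in $x$ that survives after summing over $x \in l^n$ is bounded, because $\sum_{x \in l^n} x_1^{e_1}\cdots x_n^{e_n}$ vanishes unless every $e_j$ is a positive multiple of $q-1$. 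This is the classical Chevalley--Warning mechanism, and it means only a few monomials in the expansion can contribute. I expect this to show that $V$ is contained in the zero set of some explicitly bounded-degree polynomial $g \in l[a_1,\dots,a_n]$ with $\deg g \le n(q-1)/d$ or thereabouts — more precisely, the point is that $V$ lies in the zero locus of a nonzero polynomial whose degree is small relative to $n(q-1)/d$ only if $|V|$ is small.

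\medskip

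The real content is then a combinatorial/algebraic lower bound: a nonempty subset $V \subseteq l^n$ cut out (in the appropriate sense) by the above vanishing condition must have size at least $n(q-1)/d$. Here I would argue by a counting or dimension argument on the space of polynomials of degree $< n(q-1)/d$ restricted to $V$: if $|V|$ were smaller than $n(q-1)/d$, there would be ``too many'' low-degree polynomials vanishing on $V$, and one of them would have to coincide with a forced relation that is actually nonzero on $l^n$, giving a contradiction with $V \neq f(l^n)$ being genuinely missed. Equivalently, one shows the reduction of $1 - \prod_i(1 - (f_i - a_i)^{q-1})$ modulo the ideal $(a_1^q - a_1, \dots, a_n^q - a_n)$ is a polynomial of degree $> (q-1)n - \text{(something)}$, and combine with the fact that its zero set is exactly $V$; a polynomial of high degree cannot vanish on a small set. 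The bookkeeping of which monomials survive the summation over $l^n$, and extracting from it a \emph{sharp} degree bound matching $n(q-1)/d$ rather than a lossy one, is where I expect the main difficulty to lie — in particular handling the case distinction that in \cite{WAN3} produced the weaker $\min\{n(q-1)/d, q\}$, and showing the $q$ term is never actually needed.
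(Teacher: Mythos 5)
Your route is genuinely different from the paper's, and as sketched it has a gap that I do not see how to close. The Chevalley--Warning step itself is sound: the sum $G(a)=\sum_{x\in l^n}\prod_{i=1}^n\bigl(1-(f_i(x)-a_i)^{q-1}\bigr)$ represents the function $a\mapsto\#f^{-1}(a)\bmod p$, and the vanishing of $\sum_{x\in l^n}x_1^{e_1}\cdots x_n^{e_n}$ unless every $e_j$ is a positive multiple of $q-1$ shows that the reduced degree of $G$ in $a$ is at most $n(q-1)-\lceil n(q-1)/\deg(f)\rceil$ (note: $n(q-1)$ minus the target quantity, not the target quantity itself). The problem is what this buys you. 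All you learn about $V=l^n\setminus f(l^n)$ is the inclusion $V\subseteq Z(G)$, and such an inclusion cannot produce a \emph{lower} bound on $|V|$: small sets sit inside zero loci of low-degree polynomials with no obstruction; $G$ may even be identically zero (this happens whenever every nonempty fibre of $f$ has cardinality divisible by $p$); and when $G\not\equiv 0$, the relevant polynomial-method inequality (DeMillo--Lipton--Zippel) bounds the support of $G$ from \emph{below}, hence $|Z(G)|$ from \emph{above}, which together with $V\subseteq Z(G)$ gives an upper bound on $|V|$ --- the wrong direction.

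The two mechanisms you offer to finish do not survive inspection. ``A polynomial of high degree cannot vanish on a small set'' is backwards: polynomials of arbitrarily high degree vanish on arbitrary sets ($x_1^q-x_1$ vanishes on all of $l^n$); the usable statement is that a nonzero \emph{reduced} polynomial of \emph{low} degree cannot have small nonempty support, and to apply it you would need an upper bound on the degree of the reduced indicator polynomial of $V$ itself, which does not follow from your expansion and which would in any case prove a bound of rough size $q^{n/\deg(f)}$ that is violated by the paper's first example (where $|V|=q-1$ with $n=\deg(f)$). Likewise, the zero set of $G$ is $\{a:\#f^{-1}(a)\equiv 0\bmod p\}$, which can be strictly larger than $V$, so any counting of low-degree polynomials vanishing on $V$ would have to separate $V$ from this larger set --- exactly the information a mod-$p$ point count cannot supply. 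The paper proceeds quite differently: it endows $l^n$ with a field structure $k$, compares the $k$-degree with the $l$-degree via base-$q$ digit sums, and then runs a Turnwald-style symmetric-function argument showing $\prod_{a\in k}(1-f(a)T)\equiv\prod_{a\in k}(1-aT)\pmod{T^{\lceil n(q-1)/\deg(f)\rceil}}$, whence $\prod_{a\notin f(k)}(1-aT)$ and $\prod_{b\in f(k)}(1-bT)^{\#f^{-1}(b)-1}$ are \emph{distinct} polynomials of degree at most $|V|$ that agree to order $\lceil n(q-1)/\deg(f)\rceil$. That multiplicative comparison between the missed values and the fibre multiplicities is the ingredient your sketch lacks, and it is also what removes the extra $\min\{\cdot,q\}$ from the Mullen--Wan--Wang bound.
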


We deduce the result from the case $n=1$ by putting a field structure $k$ on $l^n$ and relate the $k$-degree and the $l$-degree. We prove the result $n=1$ in a similar way as in \cite{TUR}.

\section{Degrees}

Let $l$ be a finite field of cardinality $q$ and let $V$ be a finite dimensional $l$-vector space. By $V^{\vee}=\Hom(V,l)$ we denote the dual of
$V$. Let $v_1,\ldots,v_f$ be a basis of $V$. By $x_1,\ldots,x_f$ we denote its dual basis in $V^{\vee}$, that is, $x_i$ is the map which sends $v_j$
to $\delta_{ij}$. Denote by $\Sym_l(V^{\vee})$ the symmetric algebra
of $V^{\vee}$ over $l$. We have an isomorphism $l[x_1,\ldots,x_f] \to \Sym_l(V^{\vee})$ mapping
$x_i$ to $x_i$. Note that $\Map(V,l)=l^V$ is a commutative ring under the coordinate wise addition and multiplication and it is a $l$-algebra. The
linear
map $V^{\vee} \to
\Map(V,l)$ induces by the universal property of $\Sym_l(V^{\vee})$ a ring morphism $\varphi \colon \Sym_l(V^{\vee}) \to \Map(V,l)$. When choosing a basis,
we have the following commutative diagram, where the second horizontal map is the evaluation map and the vertical maps are the natural isomorphisms:

\[
\xymatrix{
\Sym(V^{\vee}) \ar[r]^{\varphi} & \Map(V,l) \\
l[x_1,\ldots,x_n] \ar[u] \ar[r] & \Map(l^n,l). \ar[u]
}  
\]

\begin{lemma} \label{27c71}
 The map $\varphi$ is surjective. After a choice of a basis as above the kernel is equal to $(x_i^q-x_i: i=1,\ldots,f)$ and every $f \in \Map(V,l)$
has a unique representative $\sum_{m=(m_1,\ldots,m_n):\ 0 \leq m_i \leq q-1} c_m x_1^{m_1}\cdots x_n^{m_n}$ with $c_m \in l$.
\end{lemma}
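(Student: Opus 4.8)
The plan is to work entirely in coordinates, identifying $\Sym_l(V^{\vee})$ with $l[x_1,\ldots,x_f]$ (where $f=\dim_l V$) and $\Map(V,l)$ with $\Map(l^f,l)$, and to reduce everything to a dimension count. First I would observe that $\varphi$ kills each $x_i^q-x_i$: by construction $\varphi(x_i)$ is the function $v\mapsto x_i(v)$, which takes values in $l$, and every $a\in l$ satisfies $a^q=a$; hence $\varphi(x_i^q-x_i)=0$. Thus $(x_i^q-x_i: i=1,\ldots,f)\subseteq\ker\varphi$, and $\varphi$ factors through the quotient ring $R:=l[x_1,\ldots,x_f]/(x_i^q-x_i: i=1,\ldots,f)$.

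Next I would bound $\dim_l R$ from above. In $R$ one has $x_i^q=x_i$, so $x_i^k=x_i^{k-(q-1)}$ whenever $k\geq q$; iterating, every power $x_i^k$ equals $x_i^{k'}$ for some $k'$ with $0\leq k'\leq q-1$. Consequently the $q^f$ monomials $x_1^{m_1}\cdots x_f^{m_f}$ with $0\leq m_i\leq q-1$ span $R$ over $l$, so $\dim_l R\leq q^f$. On the other side, $\Map(V,l)$ is the space of functions from the $q^f$-element set $V$ to $l$, hence $\dim_l\Map(V,l)=q^f$.

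The crux is the surjectivity of $\varphi$. For this I would exhibit, for each $w\in V$, the indicator function $\delta_w$ (equal to $1$ at $w$ and $0$ elsewhere) as an explicit polynomial expression, namely
\[
\delta_w=\prod_{i=1}^{f}\bigl(1-(x_i-x_i(w))^{q-1}\bigr).
\]
Evaluating the $i$-th factor at $v\in V$ gives $1$ if $x_i(v)=x_i(w)$ (since $0^{q-1}=0$, using $q\geq 2$) and $0$ otherwise (since $a^{q-1}=1$ for $a\in l^{\times}$), so the product is $1$ exactly when $v$ and $w$ agree in every coordinate, i.e. when $v=w$. Since the functions $\delta_w$, $w\in V$, span $\Map(V,l)$, the map $\varphi$ is surjective.

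Finally I would combine these facts. The induced map $\bar\varphi\colon R\to\Map(V,l)$ is surjective, its source has dimension at most $q^f$, and its target has dimension exactly $q^f$; therefore $\bar\varphi$ is an isomorphism and $\dim_l R=q^f$. It follows that the $q^f$ spanning monomials are linearly independent, hence a basis of $R$; transporting this basis through $\bar\varphi$ yields the asserted unique representative of each element of $\Map(V,l)$, and shows $\ker\varphi=(x_i^q-x_i: i=1,\ldots,f)$. The only real subtlety is the elementary linear-algebra step "a surjective $l$-linear map whose source has dimension at most that of its target is an isomorphism"; everything else is the explicit delta-function identity and bookkeeping with the relations $x_i^q=x_i$.
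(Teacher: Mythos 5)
Your proof is correct and follows essentially the same route as the paper: explicit indicator functions $\prod_i\bigl(1-(x_i-c_i)^{q-1}\bigr)$ for surjectivity, then the relations $x_i^q=x_i$ plus a counting/dimension argument to identify the kernel and get unique representatives. (Your exponent $q-1$ is in fact the right one; the paper's displayed $f_c$ uses exponent $q$, a typo, since $1-a^q=1-a$ is not $0$ for general $a\neq 0$.)
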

\begin{proof}
 After choosing a basis, we just consider the map 
\begin{align*} 
 l[x_1,\ldots,x_f] \to \Map(l^n,l).
\end{align*} 
For $c=(c_1,\ldots,c_f) \in l^f$ set
\begin{align*}
 f_c = \prod_{i} (1-(x_i-c_i)^q).
\end{align*}
For $c' \in l^f$ we have $f_c(c')=\delta_{cc'}$. With these building blocks one easily shows that $\varphi$ is surjective.

For $i \in \{1,2,\ldots,f\}$ the element $x_i^q-x_i$ is in the kernel of $\varphi$. This shows that modulo the kernel any $g \in l[x_1,\ldots,x_f]$
has a representative 
\begin{align*}
 f = \sum_{m=(m_1,\ldots,m_r):\ 0 \leq m_i \leq q-1} c_m x_1^{m_1}\cdots x_r^{m_n}.
\end{align*}
The set of such elements has cardinality
$q^{q^r}$. As $\#
\Map(V,l)=q^{q^r}$, we see that the kernel is $(x_i^q-x_i: i=1,\ldots,r)$. Furthermore, any element has a unique representative as described above.
\end{proof}

Note that $\Sym_l(V^{\vee})$ is a graded $l$-algebra where we say that $0$ has degree $-\infty$. For $f \in \Map(V,l)$ we set 
\begin{align*}
\deg_l(f)=\mathrm{min}\left(\deg(g):\varphi(g)=f\right). 
\end{align*}
Note that $\deg_l(f_1+f_2) \leq \max(\deg_l(f_1),\deg_l(f_2))$, with equality if the degrees are different. In practice, if $f \in l[x_1,\ldots,x_n]$,
then $\deg_l(f)$ is calculated as follows: for all $i$ replace $x_i^q$ by $x_i$ until $\deg_{x_i}(f)<q$. Then the degree is the total degree of
the remaining polynomial.

Let $W$ be a finite dimensional $l$-vector space. Then we have $\Map(V,W)=W \otimes_l \Map(V,l)$. For $f \in \Map(V,W)$ we set
\begin{align*}
 \deg_l(f) = \max \left( \deg_l(g \circ f): g \in W^{\vee} \right).
\end{align*}
If $g_1,\ldots,g_n$ is a basis of $W^{\vee}$, then $\deg_l(f)= \max \left( \deg_l(g_i \circ f): i =1,\ldots,n \right)$. This follows from the
identity $\deg_l( \sum_{i} c_i g_i) \leq \max( \deg_l(g_i))$ for $c_i \in l$. Note that the degree is bounded above by $(q-1) \cdot \dim_l(V)$. 

For $i \in \Z_{\geq 0}$ and a subset $S$ of $\Sym_l(V^{\vee})$ we set 
\begin{align*}
 S^i_l = \Span_l( s_1 \cdot \ldots \cdot s_i: s_i \in S) \in \Sym_l(V^{\vee}).
\end{align*}

\begin{lemma} \label{27c90}
Let $f \in \Map(V,W)$. 
For $i \in \Z_{\geq 0}$ one has: $\deg_f \leq i$ $\iff$ $f \in W \otimes_l (l+V^{\vee})^i_l$.
\end{lemma}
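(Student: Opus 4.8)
The plan is to unwind both sides of the claimed equivalence into a statement about the degree filtration on $\Sym_l(V^{\vee})$, reduce to the scalar case $W=l$, and then read off the result from the definition of $\deg_l$. I do not expect a genuine obstacle; the only delicate point is keeping the tensor‑product bookkeeping straight and interpreting the right‑hand side correctly.

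First I would record the purely algebraic identity $(l+V^{\vee})^i_l=\bigoplus_{j=0}^{i}\Sym^j_l(V^{\vee})$, the space of elements of degree $\leq i$. The inclusion $\subseteq$ holds because each factor in a product $s_1\cdots s_i$ with $s_k\in l+V^{\vee}$ has degree $\leq 1$; for $\supseteq$, a monomial $x_{k_1}\cdots x_{k_d}$ with $d\leq i$ equals a product of $d$ elements of $V^{\vee}$ and $i-d$ copies of $1$, all lying in $l+V^{\vee}$, and such monomials span $\bigoplus_{j\leq i}\Sym^j_l(V^{\vee})$ (for $i=0$ this is the empty product $1$, giving $(l+V^{\vee})^0_l=l$). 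I would then fix once and for all the reading of the right‑hand side: since $(l+V^{\vee})^i_l\subseteq\Sym_l(V^{\vee})$, the assertion $f\in W\otimes_l(l+V^{\vee})^i_l$ means that $f$ lies in the image of $W\otimes_l(l+V^{\vee})^i_l$ under the map $\id_W\otimes\varphi\colon W\otimes_l\Sym_l(V^{\vee})\to W\otimes_l\Map(V,l)=\Map(V,W)$ induced by $\varphi$; equivalently, $f\in W\otimes_l\varphi\big((l+V^{\vee})^i_l\big)$.

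Next I would settle the case $W=l$. Here, directly from the definition $\deg_l(h)=\min\{\deg(g):\varphi(g)=h\}$ with the convention $\deg(0)=-\infty$, the inequality $\deg_l(h)\leq i$ holds if and only if $h=\varphi(g)$ for some $g$ of degree $\leq i$, i.e. for some $g\in\bigoplus_{j\leq i}\Sym^j_l(V^{\vee})=(l+V^{\vee})^i_l$; by the previous paragraph this is precisely $h\in(l+V^{\vee})^i_l$.

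Finally I would pass to general $W$. Choose a basis $g_1,\dots,g_m$ of $W^{\vee}$ with dual basis $e_1,\dots,e_m$ of $W$, so that $f=\sum_k e_k\otimes(g_k\circ f)$ in $W\otimes_l\Map(V,l)$. As noted in the text, $\deg_l(f)=\max_k\deg_l(g_k\circ f)$, so $\deg_l(f)\leq i$ if and only if $\deg_l(g_k\circ f)\leq i$ for every $k$. On the other side, for any $l$‑subspace $U\subseteq\Map(V,l)$ one has $W\otimes_l U=\bigoplus_k e_k\otimes U$ inside $W\otimes_l\Map(V,l)=\bigoplus_k e_k\otimes\Map(V,l)$; applying this with $U=\varphi\big((l+V^{\vee})^i_l\big)$ shows that $f\in W\otimes_l(l+V^{\vee})^i_l$ if and only if $g_k\circ f\in(l+V^{\vee})^i_l$ for every $k$. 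Combining these two equivalences with the scalar case completes the proof. The step most likely to need care is this last compatibility of $W\otimes_l(-)$ with the basis decomposition of $W$ together with the correct interpretation of the right‑hand side, but both are routine once stated.
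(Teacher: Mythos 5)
Your proof is correct and follows the same route as the paper: the paper also reduces to $W=l$ and rests on the identity $(l+V^{\vee})^i_l=l+V^{\vee}+\dots+(V^{\vee})^i_l$ (the degree $\le i$ part of $\Sym_l(V^{\vee})$), leaving the general case as ``follows easily.'' You have simply filled in the details the paper omits, including the correct reading of $W\otimes_l(l+V^{\vee})^i_l$ as the image under $\id_W\otimes\varphi$ and the basis decomposition of $W$.
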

\begin{proof}
Suppose first that $W=l$. The proof comes down to showing the following identity for $i \in \Z_{\geq 0}$:
\begin{align*}
 l+V^{\vee}+\ldots+ \left(V^{\vee} \right)^i_l = (l+V^{\vee})^i_l.
\end{align*}
The general case follows easily.
\end{proof}

\section{Relations between degrees}
Let $k$ be a finite field and let $l$ be a subfield of cardinality $q$. Set $h=[k:l]$. Let $V$ and $W$ be finite dimension $k$-vector spaces. Let $f
\in \Map(V,W)$. In this section we will describe the relation between the $k$-degree and the $l$-degree.

Let us first assume that $W=k$. Let $v_1,\ldots,v_r$ be a basis of $V$ over $k$. Let $R=k[x_1,\ldots,x_r]/(x_1^{q^h}-x_1,\ldots,x_r^{q^h}-x_r)$. We
have the following diagram where all morphisms are ring morphisms. Here $\psi$ is the map discussed before, $\tau$ is the natural isomorphism,
$\overline{\varphi}$ is the isomorphism discussed before, and $\sigma$ is the isomorphism, depending on the basis, discussed above.
\[
\xymatrix{
k \otimes_l \Map(V,l) \ar[r]^{\tau} & \Map(V,k) &\\
k \otimes_l \Sym_l(\Hom_l(V,l)) \ar[u]^{\psi} & \Sym_k(\Hom_k(V,k))/\ker(\varphi) \ar[u]_{\overline{\varphi}} \ar[r]^{\ \ \ \ \ \ \ \ \ \ \ \ \sigma}
& R.
}  
\]
Consider the ring morphism $\rho= \sigma \circ \overline{\varphi}^{-1} \circ \tau \circ \psi \colon k \otimes_l \Sym_l(\Hom_l(V,l)) \to R$.
Lemma \ref{27c90} suggest that to compare degrees, we need to find 
\begin{align*}
\rho(k \otimes_l
(l+\Hom_l(V,l))^i_l). 
\end{align*}
The following lemma says that it is enough to find $k+k \otimes_l \Hom_l(V,l)$.

\begin{lemma} \label{27c91}
For $i \in \Z_{\geq 0}$ we have the following equality in $k \otimes_l \Sym_l(V)$:
\begin{align*}
 k \otimes_l (l + \Hom_l(V,l))^i_l = \left(k+k \otimes_l \Hom_l(V,l) \right)^i_k.
\end{align*} 
\end{lemma}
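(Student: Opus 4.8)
The plan is to deduce the statement from a base-change fact that has nothing special to do with the subspace $l+\Hom_l(V,l)$: for \emph{any} $l$-subspace $S$ of the $l$-algebra $A=\Sym_l(\Hom_l(V,l))$ one has $k\otimes_l(S^i_l)=(k\otimes_l S)^i_k$ inside $k\otimes_l A$. Two preliminary remarks make this meaningful. First, $k$ is free (in particular flat) as an $l$-module, so for every $l$-subspace $M\subseteq A$ the natural map $k\otimes_l M\to k\otimes_l A$ is injective, and I identify $k\otimes_l M$ with its image $\Span_k(1\otimes e_\alpha:\alpha)$ whenever $M=\Span_l(e_\alpha:\alpha)$. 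Second, the $k$-algebra multiplication on $k\otimes_l A$ restricts to the multiplication of $A$, so $1\otimes(a_1\cdots a_i)=(1\otimes a_1)\cdots(1\otimes a_i)$; hence the powers $(-)^i_k$ formed in $k\otimes_l A$ involve the same products as $(-)^i_l$ in $A$.

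Granting this, the two inclusions go as follows. For "$\subseteq$": $S^i_l$ is $l$-spanned by the products $s_1\cdots s_i$ with $s_j\in S$, so $k\otimes_l(S^i_l)$ is $k$-spanned by the elements $1\otimes(s_1\cdots s_i)=(1\otimes s_1)\cdots(1\otimes s_i)$; each factor $1\otimes s_j$ lies in $k\otimes_l S$, so this product lies in $(k\otimes_l S)^i_k$. For "$\supseteq$": $(k\otimes_l S)^i_k$ is $k$-spanned by products $t_1\cdots t_i$ with $t_j\in k\otimes_l S$; writing $t_j=\sum_m c_{j,m}(1\otimes s_{j,m})$ with $c_{j,m}\in k$, $s_{j,m}\in S$, and expanding multilinearly, $t_1\cdots t_i$ is a $k$-linear combination of elements $(1\otimes s_{1,m_1})\cdots(1\otimes s_{i,m_i})=1\otimes(s_{1,m_1}\cdots s_{i,m_i})\in k\otimes_l(S^i_l)$. (Equivalently one can induct on $i$ from $S^{i+1}=S^i\cdot S$ together with $(k\otimes_l X)(k\otimes_l Y)=k\otimes_l(XY)$, but handling all $i$ at once is cleaner.)

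It remains to feed in $S=l+\Hom_l(V,l)$. In $\Sym_l(\Hom_l(V,l))$ this sum is direct, the two summands living in degrees $0$ and $1$, so $S=l\oplus\Hom_l(V,l)$ and distributivity of $k\otimes_l(-)$ over direct sums gives $k\otimes_l S=(k\otimes_l l)\oplus(k\otimes_l\Hom_l(V,l))=k+k\otimes_l\Hom_l(V,l)$. Substituting into $k\otimes_l(S^i_l)=(k\otimes_l S)^i_k$ gives exactly the asserted equality in $k\otimes_l\Sym_l(\Hom_l(V,l))$.

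The content here is essentially bookkeeping with tensor products, so I do not anticipate a genuine obstacle; the only spots deserving explicit mention are the identification of $k\otimes_l M$ with a $k$-subspace of $k\otimes_l A$ (freeness, hence flatness, of $k$ over $l$) and the compatibility of the $l$- and $k$-algebra multiplications. Once those are in place, the "span of $i$-fold products" operation commutes with base change from $l$ to $k$, which is all the lemma says.
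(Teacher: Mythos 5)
Your proof is correct and follows the same route the paper intends: both sides are $k$-subspaces of $k\otimes_l\Sym_l(\Hom_l(V,l))$, and one verifies the two inclusions, which the paper dismisses as "not hard to see." You simply supply the details (flatness of $k$ over $l$ to identify $k\otimes_l M$ with a subspace, compatibility of the multiplications, and multilinear expansion for the reverse inclusion), packaged as the general fact $k\otimes_l(S^i_l)=(k\otimes_l S)^i_k$ for any $l$-subspace $S$.
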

\begin{proof}
 Both are $k$-vector spaces and the inclusions are not hard to see.
\end{proof}

The following lemma identifies $k+k \otimes_l \Hom_l(V,l)$.

\begin{lemma} \label{27c92}
One has
\begin{align*}
\rho(k+k \otimes_l \Hom_l(V,l)) = \Span_k \left(\{ x_j^{q^s}: 1 \leq j \leq r,\ 0 \leq s <h \} \sqcup
\{1\} \right).
\end{align*}
\end{lemma}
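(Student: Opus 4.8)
The strategy is to chase the definition of $\rho$ through the displayed diagram and, along the way, to identify the space of $l$\dash linear functionals $V\to k$ with the $k$\dash span of the Frobenius twists of the coordinates $x_1,\ldots,x_r$.

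First I would record how $\rho$ acts on the degree\dash$\leq 1$ part. Since $\varphi$ restricts to the identity on $l+\Hom_l(V,l)\subseteq\Sym_l(\Hom_l(V,l))$ and $\tau$ is the canonical isomorphism $c\otimes g\mapsto(v\mapsto c\,g(v))$, the composite $\tau\circ\psi$ carries $c\otimes g\in k\otimes_l\Hom_l(V,l)$ to the function $v\mapsto c\,g(v)\in k$. Hence $\tau\circ\psi$ maps $k+k\otimes_l\Hom_l(V,l)$ onto the set of functions $v\mapsto c_0+\phi(v)$ with $c_0\in k$ and $\phi\in\Hom_l(V,k)$: the inclusion ``$\subseteq$'' is clear, and conversely any $\phi\in\Hom_l(V,k)$ can be written as $\sum_t e_t\,(\pi_t\circ\phi)$ for an $l$\dash basis $e_1,\ldots,e_h$ of $k$ with dual projections $\pi_t\colon k\to l$, and each $\pi_t\circ\phi$ lies in $\Hom_l(V,l)$. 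Finally $\sigma\circ\overline{\varphi}^{-1}$ sends a function $V\to k$ to its unique reduced representative in $R$ provided by Lemma \ref{27c71} applied over $k$. So the problem reduces to (i) identifying $\Hom_l(V,k)$ as a set of functions and (ii) checking these functions are already in reduced form.

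For (i), I claim $\Hom_l(V,k)=\Span_k\{\,v\mapsto x_j(v)^{q^s}:1\leq j\leq r,\ 0\leq s<h\,\}$. Decomposing $V=\bigoplus_{j=1}^r k v_j$ reduces this to the case $r=1$, i.e.\ to $\Hom_l(k,k)$. Each map $x\mapsto x^{q^s}$ is $l$\dash linear because $q=p^a$ forces additivity in characteristic $p$ and $\lambda^{q^s}=\lambda$ for $\lambda\in l$; these $h$ maps are $k$\dash linearly independent, since a nonzero $k$\dash linear relation would give a nonzero polynomial $\sum_{s<h}c_s X^{q^s}$ of degree $\leq q^{h-1}<q^h=|k|$ vanishing on all of $k$; and $\dim_k\Hom_l(k,k)=\dim_l\Hom_l(k,k)/h=h^2/h=h$, so they form a $k$\dash basis. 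For (ii), the polynomial $c_0+\sum_{j,s}c_{j,s}x_j^{q^s}$ has degree in each $x_j$ at most $q^{h-1}<q^h$, hence is its own reduced representative in $R$. Combining (i) and (ii) with the description of $\tau\circ\psi$ above yields $\rho(k+k\otimes_l\Hom_l(V,l))=\Span_k(\{x_j^{q^s}\}\sqcup\{1\})$. The only genuinely substantive point is the linear\dash independence and dimension count identifying $\Hom_l(k,k)$ with the $k$\dash span of the Frobenius powers; everything else is bookkeeping along the diagram.
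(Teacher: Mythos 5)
Your proof is correct and follows essentially the same route as the paper: identify the image of $k+k\otimes_l\Hom_l(V,l)$ under $\tau\circ\psi$ as $k\oplus\Hom_l(V,k)$ and then match it with the $k$-span of the Frobenius twists $x_j^{q^s}$ by a dimension count. You merely make explicit the $k$-linear independence of the maps $v\mapsto x_j(v)^{q^s}$, which the paper's dimension argument leaves implicit.
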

\begin{proof}
Note that $\tau \circ \phi (k+k \otimes_l \Hom_l(V,l))= k \oplus \Hom_l(V,k) \subseteq \Map(V,k)$. Note that 
\begin{align*}
\sigma^{-1} \left(\Span_k \left(\{ x_j^{q^s}: 1 \leq j \leq r,\ 0 \leq s <h \} \right) \right)\subseteq \Hom_l(V,k) .
\end{align*}
As both sets have dimension $\dim_l(V)=r \cdot h$ over $k$, the result follows.
\end{proof}

For $m, n \in \Z_{\geq 1}$ we set $s_m(n)$ to be the sum of the digits of $n$ in base $m$.

\begin{lemma} \label{27c93}
Let $m \in \Z_{\geq 2}$ and $n, n' \in \Z_{\geq 0}$. 
Then the following hold:
\begin{enumerate}
 \item $s_m(n+n') \leq s_m(n)+s_m(n')$; 
 \item Suppose $n=\sum_{i} c_i m^i$, $c_i \geq 0$. Then we have $\sum_i c_i \geq s_m(n)$ with equality iff for all $i$ we have $c_i<m$.
\end{enumerate}
\end{lemma}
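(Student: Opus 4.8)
The plan is to prove this as a self-contained statement about base-$m$ digit sums, the one external ingredient being the classical identity
\[
 s_m(n) \;=\; n - (m-1)\sum_{i \geq 1} \left\lfloor \frac{n}{m^i} \right\rfloor .
\]
I would derive it by writing $n = \sum_{j \geq 0} d_j m^j$ with $0 \leq d_j < m$, using $\lfloor n/m^i \rfloor = \sum_{j \geq i} d_j m^{j-i}$, and summing the resulting geometric series in $j$; this amounts to the telescoping identity $n - s_m(n) = \sum_{j \geq 1} d_j (m^j - 1) = (m-1)\sum_{j \geq 1} d_j(m^{j-1} + \cdots + 1)$.

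For part (i), I would combine this identity with superadditivity of the floor, $\lfloor a + b\rfloor \geq \lfloor a\rfloor + \lfloor b\rfloor$, taken at $a = n/m^i$ and $b = n'/m^i$ and summed over $i \geq 1$. Substituting the identity for $n$, $n'$ and $n+n'$ turns the sought inequality into
\[
 s_m(n) + s_m(n') - s_m(n+n') = (m-1)\left( \sum_{i\geq1}\left\lfloor\frac{n+n'}{m^i}\right\rfloor - \sum_{i\geq1}\left\lfloor\frac{n}{m^i}\right\rfloor - \sum_{i\geq1}\left\lfloor\frac{n'}{m^i}\right\rfloor \right) \geq 0 .
\]
If one prefers to avoid this identity, a direct induction on $n+n'$ also works: split off the last base-$m$ digit of $n$ and of $n'$, and distinguish the two cases according to whether their sum produces a carry; in the carry case one loses an extra $m-1 \geq 1$, which only strengthens the inequality. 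I would keep the identity-based proof for brevity, and state part (i) in the form ``$s_m$ is subadditive'' so it can be invoked cleanly below.

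For part (ii), the key auxiliary fact is the single-term bound $s_m(c) \leq c$ for integers $c \geq 0$, with equality if and only if $c < m$: for $c < m$ it is equality by definition, while for $c \geq m$ the identity above gives $c - s_m(c) = (m-1)\sum_{i\geq1}\lfloor c/m^i\rfloor \geq m-1 \geq 1$. Since multiplying by $m^i$ merely shifts a base-$m$ expansion, $s_m(c_i m^i) = s_m(c_i)$, so iterating part (i) yields
\[
 s_m(n) = s_m\Big(\textstyle\sum_i c_i m^i\Big) \leq \sum_i s_m(c_i m^i) = \sum_i s_m(c_i) \leq \sum_i c_i .
\]
For the equality clause: if all $c_i < m$, then $\sum_i c_i m^i$ is literally the base-$m$ expansion of $n$, so $s_m(n) = \sum_i c_i$; conversely, if some $c_{i_0} \geq m$, then $s_m(c_{i_0}) < c_{i_0}$ makes the last inequality above strict, hence $s_m(n) < \sum_i c_i$.

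I do not anticipate a genuine obstacle: the only points requiring care are the double-sum bookkeeping when deriving the digit-sum identity and, in part (ii), keeping track of which of the two displayed inequalities can be strict in the equality analysis.
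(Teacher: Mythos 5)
Your proof is correct and complete. Structurally it matches the paper's proof of part (ii): both arguments reduce to the single-term bound $s_m(c\,m^i) \leq c$ (with equality iff $c < m$) and then assemble the general case using subadditivity from part (i). The difference lies in how the single-term bound is obtained and in how much of (i) is actually proved. The paper handles the single-term case by induction on $n$, rewriting $c_s m^s$ as $(c_s-m)m^s + m^{s+1}$ when $c_s \geq m$, and dismisses (i) as well known; you instead derive everything from the Legendre-type identity $s_m(n) = n - (m-1)\sum_{i\geq 1}\lfloor n/m^i\rfloor$, which yields (i) via superadditivity of the floor and gives the strict inequality $s_m(c) < c$ for $c \geq m$ in one line. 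Your route costs a little more setup (you must verify the identity), but it is self-contained, avoids induction entirely, and makes the equality analysis --- which the paper waves off with ``one easily sees'' --- completely explicit. Either approach suffices for the way the lemma is used later in the paper.
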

\begin{proof}
i. This is well-known and left to the reader.

ii. We give a proof by induction on $n$. For $n=0$ the result is correct. Suppose first that $n=c_s m^s$ and assume that $c_s \geq m$. Then we have
$n=(c_s-m)m^s+m^{s+1}$. By induction and i we have 
\begin{align*}
 c_s > c_s-m + 1 \geq s_m((c_s-m)m^s)+s_m(m^{s+1}) \geq s_m(c_s m^s).
\end{align*}

In general, using i, we find
\begin{align*}
 \sum_i c_i \geq \sum_i s_m(c_i m^i) \geq s_m(n).
\end{align*}
Also, one easily sees that one has equality iff all $c_i$ are smaller than $m$.
\end{proof}

\begin{proposition} \label{27c33}
Let $f \in k[x_1,\ldots,x_r]$ nonzero with the degree in all
$x_i$ of all the monomials less than $q^h$. Write $f= \sum_{s=(s_1,\ldots,s_r)} c_s x_1^{s_1}\cdots x_r^{s_r}$. Then the $l$-degree of
$\tau^{-1} \circ \overline{\varphi}(\overline{f}) \in k \otimes_l \Map(V,l)$ is equal to
\begin{align*}
\max\{s_q(s_1)+\ldots+s_q(s_r): s=(s_1,\ldots,s_r) \textrm{ s.t. } c_s \neq 0\}.
\end{align*}
\end{proposition}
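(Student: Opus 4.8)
\emph{The plan} is to deduce the formula from the equivalence
$\deg_l\big(\tau^{-1}\circ\overline\varphi(\overline f)\big)\le i\iff\overline f\in T^i_k$ in $R$, where
$T:=\Span_k\big(\{1\}\cup\{x_j^{q^s}:1\le j\le r,\ 0\le s<h\}\big)$, and then to describe $T^i_k$ explicitly and reduce everything to a base-$q$ digit-sum computation. For the equivalence: by Lemma~\ref{27c90} (applied with the target $k$ viewed as an $l$-vector space) together with the surjectivity of $\varphi$ from Lemma~\ref{27c71}, a function in $k\otimes_l\Map(V,l)$ has $l$-degree $\le i$ precisely when it lies in $\psi\big(k\otimes_l(l+\Hom_l(V,l))^i_l\big)$. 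Transporting this membership through the isomorphisms $\tau,\overline\varphi,\sigma$ and using that $\rho$ is a morphism of $k$-algebras converts it into $\overline f\in\rho\big(k\otimes_l(l+\Hom_l(V,l))^i_l\big)$; then Lemma~\ref{27c91} rewrites the argument of $\rho$ as $(k+k\otimes_l\Hom_l(V,l))^i_k$, and since $\rho$ is $k$-linear and multiplicative its image is $\big(\rho(k+k\otimes_l\Hom_l(V,l))\big)^i_k=T^i_k$ by Lemma~\ref{27c92}. As $1\in T$ one has $T^0_k\subseteq T^1_k\subseteq\cdots$, so $\deg_l(\tau^{-1}\overline\varphi(\overline f))=\min\{i:\overline f\in T^i_k\}$.

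Next I would unwind $T^i_k$. A product of $i$ of the chosen generators of $T$ (allowing repeated use of the factor $1$) is a monomial $\prod_j x_j^{e_j}$ with $e_j=\sum_{s=0}^{h-1}a_{j,s}q^s$, all $a_{j,s}\in\Z_{\ge 0}$ and $\sum_{j,s}a_{j,s}\le i$; reducing modulo the relations $x_j^{q^h}=x_j$ replaces each $e_j$ by its normal form $\overline{e_j}\in\{0,\dots,q^h-1\}$, obtained by iterating the step $e\mapsto\lfloor e/q^h\rfloor+(e\bmod q^h)$. Hence $T^i_k$ is the $k$-span of the resulting reduced monomials. Since the reduced monomials $\prod_j x_j^{s_j}$ with $0\le s_j<q^h$ form a $k$-basis of $R$ and $\overline f$ is already written in this form, $\overline f\in T^i_k$ holds iff every $s=(s_1,\dots,s_r)$ with $c_s\ne0$ arises from admissible exponent data of total weight $\sum_{j,s}a_{j,s}\le i$; as both the constraints and the weight split over the variables, minimizing coordinatewise gives
\[
\deg_l(\tau^{-1}\overline\varphi(\overline f))=\max_{c_s\ne0}\sum_{j=1}^r N(s_j),\quad N(s):=\min\Big\{\textstyle\sum_{t=0}^{h-1}a_t:\ a_t\in\Z_{\ge 0},\ \overline{\textstyle\sum_t a_tq^t}=s\Big\}.
\]

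It then remains to show $N(s)=s_q(s)$ for $0\le s<q^h$. The inequality $N(s)\le s_q(s)$ is witnessed by taking the $a_t$ to be the base-$q$ digits of $s$. For $N(s)\ge s_q(s)$, take any admissible $(a_t)$, put $e:=\sum_t a_tq^t$, use Lemma~\ref{27c93}(ii) to get $\sum_t a_t\ge s_q(e)$, and observe that a single reduction step cannot increase the digit sum: writing $e=q^hu+v$ with $0\le v<q^h$, the base-$q$ expansions of $q^hu$ and of $v$ occupy disjoint digit positions, so $s_q(e)=s_q(u)+s_q(v)\ge s_q(u+v)$ by Lemma~\ref{27c93}(i); iterating down to the fixed point $\overline e=s$ yields $s_q(e)\ge s_q(s)$. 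Plugging $N(s_j)=s_q(s_j)$ into the displayed formula completes the proof. The step I expect to require the most care is the passage from ``$\overline f\in T^i_k$'' to the coordinatewise minimization: one must be sure that $T^i_k$ is literally spanned by reduced monomials (and is not enlarged by cancellations among unreduced products), handle the $1$-padding and the degenerate exponent $e=0$ (reducing to the constant monomial) correctly, and check that the minimization decouples because the membership condition splits over $x_1,\dots,x_r$.
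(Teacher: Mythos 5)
Your proposal is correct and follows the paper's own route: it derives the characterization $\deg_l\le i\iff\overline f\in\big(\Span_k(\{1\}\cup\{x_j^{q^s}\})\big)^i_k$ from Lemmas \ref{27c90}, \ref{27c91} and \ref{27c92}, and then concludes via Lemma \ref{27c93}. The only difference is that you spell out the final combinatorial step (the reduction of exponents modulo $x_j^{q^h}=x_j$ and the identity $N(s)=s_q(s)$), which the paper compresses into ``the result follows from Lemma \ref{27c93}''; your elaboration is accurate.
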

\begin{proof}
Put $g=\tau^{-1} \circ \overline{\varphi} \circ \sigma^{-1} (\overline{f})$. From Lemma \ref{27c90}, Lemma \ref{27c91} and
Lemma \ref{27c92} we obtain the following. Let $i \in \Z_{\geq 0}$. Then $\deg_l(g) \leq i$ iff 
\begin{align*}
g \in \rho(k \otimes_l (l + \Hom_l(V,l))^i_l) &= \rho (\left(k+k \otimes_l \Hom_l(V,l) \right)^i_k) \\
&= \left(\Span_k \left(\{ x_j^{q^s}: 1 \leq j \leq r,\ 0 \leq s <h \} \sqcup \{1\} \right) \right)_k^i.
\end{align*}
The result follows from Lemma \ref{27c93}.
\end{proof}

The case for a general $W$ just follows by decomposing $W$ into a direct sum of copies of $k$ and then taking the maximum of the corresponding
degrees.

\section{Proof of main theorem}

\begin{lemma} \label{27c67}
 Let $m , q, h \in \Z_{>0}$ and suppose that $q^h-1|m$. Then we have: $s_q(m) \geq h(q-1)$.
\end{lemma}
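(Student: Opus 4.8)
The plan is to prove the statement by strong induction on $m$, grouping the base-$q$ digits of $m$ into consecutive blocks of length $h$. Note first that $q = 1$ is impossible, since then $q^h - 1 = 0$ cannot divide the positive integer $m$; so we may assume $q \geq 2$, which is also the range in which Lemma \ref{27c93} applies.

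Before starting the induction I would set up the following. Write $m$ in base $q^h$, say $m = \sum_{j \geq 0} d_j\, q^{hj}$ with $0 \leq d_j < q^h$. Expanding each $d_j$ in base $q$ and using that these expansions occupy disjoint blocks of $h$ positions, one sees that the base-$q$ expansion of $m$ is the concatenation of those of the $d_j$; hence $s_q(m) = \sum_j s_q(d_j)$. On the other hand, iterating Lemma \ref{27c93}(i) gives $s_q\!\left(\sum_j d_j\right) \leq \sum_j s_q(d_j)$. Writing $N = \sum_j d_j$, we therefore have $s_q(m) \geq s_q(N)$. Finally $q^h \equiv 1 \pmod{q^h - 1}$ gives $N \equiv m \equiv 0 \pmod{q^h - 1}$, and $N \geq 1$ because $m \geq 1$; moreover $m - N = \sum_j d_j(q^{hj} - 1) \geq 0$, with equality precisely when $d_j = 0$ for all $j \geq 1$, i.e.\ when $m = d_0 < q^h$.

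For the base case of the induction, assume $m < q^h$. Then $m$ is a positive multiple of $q^h - 1$ that is $< q^h$, so $m = q^h - 1$ (because $2(q^h - 1) \geq q^h$ when $q^h \geq 2$). Since $q^h - 1 = \sum_{i=0}^{h-1} (q-1) q^i$, its base-$q$ expansion has $h$ digits, each equal to $q - 1$, so $s_q(m) = h(q-1)$. For the inductive step, assume $m \geq q^h$. Then $1 \leq N < m$ and $q^h - 1 \mid N$, so by the induction hypothesis $s_q(N) \geq h(q-1)$, and hence $s_q(m) \geq s_q(N) \geq h(q-1)$.

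I do not anticipate a serious obstacle: the argument becomes elementary once one passes from $m$ to the block-sum $N$, which strictly decreases the integer (keeping the induction well-founded) while not increasing the base-$q$ digit sum. The only points to verify with care are the additivity $s_q(m) = \sum_j s_q(d_j)$ under the block decomposition and the digit pattern of $q^h - 1$, both of which are routine.
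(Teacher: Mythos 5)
Your proof is correct and takes essentially the same route as the paper: the paper peels off a single base-$q^h$ block, writing $m = m_0 q^h + m_1$ and inducting via the smaller multiple $m_0+m_1$ of $q^h-1$, whereas you decompose into all blocks at once and pass to the block-sum $N=\sum_j d_j$. The key ingredients (additivity of $s_q$ across disjoint digit blocks, subadditivity from Lemma \ref{27c93}(i), and reduction modulo $q^h-1$) are identical, so this is the same argument in a slightly different packaging.
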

\begin{proof}
We do a proof by induction on $m$.

 Suppose that $m<q^h$. Then $m=q^h-1$ and we have $s_q(m)=h(q-1)$.
 
 Suppose $m \geq q^h$. Write $m=m_0q^h+m_1$ with $0 \leq m_1<q^h$ and $m_0 \geq 1$. We claim that $q^h-1|m_0+m_1$. Note that $
m_0+m_1 \equiv m_0q^h+m_1 \equiv 0 \pmod{q^h-1}$. 
Then by induction we find
\begin{align*}
 s_q(m)=s_q(m_0)+s_q(m_1) \geq s_q(m_0+m_1) \geq h(q-1).
\end{align*}
\end{proof}

\begin{lemma} \label{27c89}
Let $k$ be a finite field of cardinality $q'$. Let $R=k[X_{a}: a \in k]$ and consider the action of
$k^{*}$ on $R$ given by
\begin{align*}
 k^* &\mapsto \Aut_{k-\alg}(R) \\
  c &\mapsto (X_a \mapsto X_{ca}).
\end{align*}
Let $F \in R$ fixed by the action of $k^*$ with $F(0,\ldots,0)=0$ and such that the degree of no monomial of $F$ is a multiple of $q'-1$. Then
for $w=(a)_{a} \in k^k$ we
have $F(w)=0$.
\end{lemma}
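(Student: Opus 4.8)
The plan is to collapse the $q'$ variables into a single one via the substitution $X_a \mapsto ta$, where $t$ is a new indeterminate, and then to play the $k^*$-invariance of $F$ off against a crude count of roots.

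Set $\Phi(t) = F\bigl((ta)_{a\in k}\bigr) \in k[t]$, the polynomial obtained from $F$ by substituting $X_a \mapsto ta$ for each $a$. A monomial $\prod_{a} X_a^{e_a}$ of $F$ of total degree $d$ becomes $\bigl(\prod_a a^{e_a}\bigr)\, t^d$, so $\Phi(t) = \sum_{d\ge 0} A_d\, t^d$ with $A_d\in k$, and $A_d$ depends only on the degree-$d$ part of $F$; the hypotheses on $F$ thus force $A_d = 0$ whenever $(q'-1)\mid d$, in particular $A_0 = 0$. Also $\Phi(1) = F(w)$, so it is enough to show $\Phi(1) = 0$. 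Now I would bring in the invariance: for $c\in k^*$ let $\phi_c$ denote the $k$-algebra automorphism $X_a \mapsto X_{ca}$ of $R$ and let $e_w\colon R\to k$ be evaluation at $w$, i.e. $X_a\mapsto a$; then $e_w\circ \phi_c$ is the substitution $X_a\mapsto ca$, so $\Phi(c) = (e_w\circ\phi_c)(F) = e_w(\phi_c F) = e_w(F) = \Phi(1)$ since $\phi_c F = F$. Hence $\Phi$ takes the constant value $F(w)$ on all of $k^*$.

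Finally I would reduce modulo $t^{q'}-t = \prod_{a\in k}(t-a)$: let $P(t)\in k[t]$ be the unique polynomial of degree $<q'$ with $\Phi(t)\equiv P(t)\pmod{t^{q'}-t}$. For $d\ge 1$ one has $t^d\equiv t^{d'}\pmod{t^{q'}-t}$ where $d'\in\{1,\dots,q'-1\}$ with $d'\equiv d\pmod{q'-1}$, so the coefficient of $t^{q'-1}$ in $P$ is $\sum_{d\ge 1,\,(q'-1)\mid d} A_d = 0$; therefore $\deg P\le q'-2$ and $P$ has zero constant term. Since $\Phi$ and $P$ agree as functions on $k$, the polynomial $P(t)-F(w)$ vanishes at every element of $k^*$, so it has at least $q'-1$ roots while having degree at most $q'-2$; it must be the zero polynomial, whence $P(t)=F(w)$ and $F(w)=P(0)=0$.

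The one place needing care is the invariance step — verifying that the $k^*$-action on $R$ becomes multiplication of $t$ by scalars once we evaluate at $w$ — but this is an immediate computation with evaluation homomorphisms. After that the root count is forced precisely by the congruence condition on the monomial degrees of $F$ (which is exactly what kills the top coefficient after reduction), so I do not anticipate a real obstacle.
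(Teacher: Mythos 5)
Your proof is correct. Both arguments pivot on the same consequence of invariance, namely $F(cw)=F(w)$ for every $c\in k^*$ (in your notation, $\Phi(c)=\Phi(1)$), but the way you extract $F(w)=0$ from the degree hypothesis is genuinely different. The paper first reduces to a homogeneous $F$ of degree $d$ with $(q'-1)\nmid d$ (the action preserves degrees, so each homogeneous component is itself invariant), and then evaluates the invariance identity at a single generator $\lambda$ of the cyclic group $k^*$ to get $F(w)=F(\lambda w)=\lambda^{d}F(w)$ with $\lambda^{d}\neq 1$ --- a two-line eigenvalue argument. You instead package all degrees at once into the one-variable polynomial $\Phi(t)=\sum_d A_d t^d$, reduce modulo $t^{q'}-t$, and note that the hypothesis kills precisely those coefficients ($d\equiv 0 \bmod{q'-1}$) which could contribute to the constant term or to $t^{q'-1}$ of the reduced polynomial $P$; a polynomial of degree at most $q'-2$ that is constant on the $q'-1$ elements of $k^*$ must then be the constant $P(0)=0$. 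The two mechanisms are equivalent in substance --- your root count over all of $k^*$ is, in effect, the paper's generator argument spread over the whole group --- but the paper's version is shorter and makes the role of the condition $(q'-1)\nmid d$ more transparent, while yours avoids the homogeneous decomposition and fits naturally with the reduction-mod-$t^{q'}-t$ point of view used elsewhere in the paper. All the individual steps you flag (the computation $e_w\circ\phi_c = $ evaluation at $cw$, the identification of the coefficient of $t^{q'-1}$ in $P$, and the degenerate case $q'=2$, where the hypothesis forces $F=0$) check out.
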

\begin{proof}
 We may assume that $F$ is homogeneous with $d=\deg(F)$ which is not a multiple of $q'-1$. Take $\lambda \in k^*$ a generator of the cyclic group.
As $F$ is fixed by $k^*$ we find: 
\begin{align*}
 F(w) = F( \lambda w)=\lambda^{d} F(w).
\end{align*}
As $\lambda^d \neq 1$, we have $F(w)=0$ and the result follows.
\end{proof}

Finally we can state and prove a stronger version of Theorem \ref{27c14}.

\begin{theorem} \label{27c18}
Let $k$ be a finite field. Let $l \subseteq k$ be a subfield with $[k:l]=h$ and let $V$ be a finite dimensional $k$-vector space. Let $f \in
\Map(V,V)$ be a non-constant and non-surjective map. Then $f$ misses at least
\begin{align*}
 \frac{\dim_k(V) \cdot h \cdot (\#l-1)}{\deg_l(f)}
\end{align*}
values.
\end{theorem}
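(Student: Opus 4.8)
The plan is to reduce the statement to the one-variable case over a larger finite field and then to run a power-sum argument in the style of \cite{TUR}, the new input being that a small $l$-degree forces a long initial segment of power sums to vanish.

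First I would put a field structure on $V$. Since $\#V=(\#l)^{h\dim_k(V)}$ is a prime power, there is a finite field $K$ with $\#K=\#V$ and a subfield $l\subseteq K$ with $[K:l]=h\cdot\dim_k(V)=:m$; fix an isomorphism $V\cong K$ of $l$-vector spaces. Under this identification $f$ becomes a map $g\colon K\to K$ which is still non-constant and non-surjective, and whose $l$-degree is unchanged because $\deg_l$ was defined purely in terms of the $l$-vector space structure (Lemma \ref{27c90}). So it suffices to prove: for a finite field $K$ with a subfield $l$ of cardinality $q$ and $m=[K:l]$, a non-constant non-surjective $g\in\Map(K,K)$ misses at least $m(q-1)/\deg_l(g)$ values. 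Using Lemma \ref{27c71} write $g$ as $\sum_s c_s x^s$ with $0\le s<\#K=:Q$; then Proposition \ref{27c33} gives $D:=\deg_l(g)=\max\{s_q(s):c_s\neq0\}$, and expanding the exponents in base $q$ shows $g(x)=P(x,x^q,\dots,x^{q^{m-1}})$ for a polynomial $P$ over $K$ of total degree $D$ with each variable-degree $<q$.

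Next I would show that the power sums $p_j:=\sum_{a\in K}g(a)^j$ vanish for $1\le j<m(q-1)/D$. To compute $g(a)^j$ as a function one reduces $P^j$, which has total degree at most $Dj$, modulo the relations $z_i^q-z_{i+1\bmod m}$ (these do vanish upon substituting $z_i=a^{q^i}$); each such reduction strictly lowers the total degree, so the reduced representative still has total degree at most $Dj$ and all variable-degrees $<q$. Summing over $a\in K$ picks out only the coefficient of $(z_0\cdots z_{m-1})^{q-1}$, since $\sum_{a\in K}a^{\sum_i d_i q^i}$ with all $0\le d_i<q$ is $0$ unless every $d_i=q-1$ (this is the combinatorics of Lemma \ref{27c93} and the argument behind Lemma \ref{27c67}). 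That monomial has total degree $m(q-1)$, so if $Dj<m(q-1)$ it cannot occur and $p_j=0$.

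Finally I would run the Turnwald-type contradiction. Assume $t:=\#(K\setminus g(K))<m(q-1)/D$, so $p_1=\dots=p_t=0$; since \emph{every} $p_j$ with $1\le j<m(q-1)/D$ vanishes, Newton's identities give $je_j=0$, hence $e_j=0$ for all such $j$ with $p\nmid j$, where $e_j=e_j(\{g(a)\}_{a\in K})$ and $\phi(Y):=\prod_{a\in K}(Y-g(a))=\sum_j(-1)^je_jY^{Q-j}$. On one hand $r(Y):=\phi(Y)-(Y^Q-Y)$ has degree $\le Q-1$ and, as a function on $K$, vanishes exactly on $g(K)$, so $\prod_{b\in g(K)}(Y-b)$, of degree $Q-t$, divides $r$; and $r\neq0$ because $g$ is not surjective, so $\deg r\ge Q-t$. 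On the other hand, once one knows $e_1=\dots=e_t=0$, the polynomial $r$ has degree $\le Q-t-1$, a contradiction. The hard part will be exactly this last point: showing $e_j=0$ also for the $j\le t$ divisible by the characteristic $p$, for which Newton's identities alone are powerless. This is where Lemma \ref{27c89} enters: one applies it to a $K^{*}$-invariant polynomial in the indeterminates $(X_a)_{a\in K}$ obtained from a symmetric function by substituting $X_a\mapsto g(X_a)$ (the $K^*$-action being $X_a\mapsto X_{ca}$), which is homogeneous and vanishes at the origin because $\binom{Q}{j}\equiv0\pmod p$ for $1\le j\le Q-1$; the remaining degree hypothesis — that its monomial degrees avoid multiples of $Q-1$ — is to be secured using that the power-sum vanishing holds in the whole range $j<m(q-1)/D$, not merely $j\le t$, together with the Frobenius relation $p_{pj}=p_j^p$. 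Making this last step precise is the main obstacle.
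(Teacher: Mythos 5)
Your reduction to the one-variable case over a field $K$ with $[K:l]=m=h\dim_k(V)$ is fine, and your computation that the power sums $p_j=\sum_{a\in K}g(a)^j$ vanish for $1\le j<m(q-1)/D$ is correct. But the proof is not complete, and you have correctly located where it breaks: in characteristic $p$ the power sums do not determine the elementary symmetric functions $e_j$ for $p\mid j$, and no strengthening of the power-sum information can repair this. (Already for $Q=2$, $p=2$: if the two values are $x,y$ with $p_1=x+y=0$, then $y=x$ and every $p_j$ vanishes, while $e_2=xy=x^2$ need not; in general $e_p$ is not a polynomial in $p_1,p_2,\dots$ over $\F_p$.) In particular the mechanism you propose for the case $p\mid j$ --- extending the range of vanishing power sums via $p_{pj}=p_j^p$ and feeding this back into Newton's identities --- cannot close the gap, because the obstruction is structural, not a matter of range.

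The way out, and the actual content of the paper's proof, is to bypass power sums and Newton's identities entirely and prove $e_j=0$ for every $1\le j<m(q-1)/D$ directly, by applying Lemma \ref{27c89} to $G_j:=e_j\bigl((f_0(X_a))_{a\in K}\bigr)\in K[X_a:a\in K]$, where $f_0$ is a polynomial of degree $<\#K$ inducing $g$ and one has normalized $g(0)=0$ so that $G_j(0,\dots,0)=0$. The polynomial $G_j$ is visibly invariant under $X_a\mapsto X_{ca}$, and the hypothesis of Lemma \ref{27c89} --- that no monomial of $G_j$ has degree a positive multiple of $\#K-1$ --- is secured by exactly the digit-sum bookkeeping you already set up: each monomial $\prod_t X_{a_t}^{r_t}$ of $G_j$ satisfies $\sum_t s_q(r_t)\le j\deg_l(g)<m(q-1)$ by Proposition \ref{27c33}, whereas any positive multiple of $\#K-1=q^m-1$ has $q$-ary digit sum at least $m(q-1)$ by Lemma \ref{27c67}, and $s_q\bigl(\sum_t r_t\bigr)\le\sum_t s_q(r_t)$ by Lemma \ref{27c93}. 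Evaluating $G_j$ at the point $(a)_{a\in K}$ then yields $e_j(\{g(a)\}_a)=0$ for all $j$ in the range, including those divisible by $p$; after that your comparison of $\prod_{b\in g(K)}(Y-b)$ with $\phi(Y)-(Y^Q-Y)$ (equivalently, the paper's congruence of generating polynomials modulo $T^{\lceil m(q-1)/D\rceil}$) finishes the argument. Once you argue this way, the power-sum computation and Newton's identities become unnecessary.
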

\begin{proof}
Set $\#l =q$. Put a $k$-linear multiplication on $V$ such that it becomes a field. This allows us reduce to the case where $V=k$. Assume $V=k$.
After shifting we may assume $f(0)=0$. Put an ordering $\leq$ on $k$. 
In $k[T]$ we have
\begin{align*}
 \prod_{a \in k} (1-f(a)T) = 1 - \sum_{a} f(a)T+ \sum_{a<b} f(a)f(b)T^2 - \ldots = \sum_{i} a_i T^i.
\end{align*}
For $1 \leq i < \frac{h(q-1)}{\deg_l(f)}$ we claim that $a_i=0$. Put $f_0 \in k[x]$ a polynomial of degree at most $q^h-1$ inducing $f \colon k \to k$. 
Consider $g_i=\prod_{a_1<\ldots<a_i} f_0(X_{a_1})\cdots f_0(X_{a_i})$ in $k[X_{a}: a \in k]$, which is fixed by $k^*$. We have a map
\begin{align*}
\varphi \colon k[X_a: a \in k] \to \Map(k^k,k).
\end{align*}
 Proposition \ref{27c33} gives us that $\deg_l(\varphi(g_i)) = i \deg_l(f)<h (q-1)$.
We claim that there is no monomial in $g_i$ with degree a multiple of $q^h-1$. Indeed, suppose that there is a monomial $c X_{a_1}^{r_1}\cdots
X_{a_i}^{r_i}$ ($c \neq 0$) in $g_i$ (note that not all $r_i$ are zero) and suppose that $q^h-1|\sum_i r_i$. Then by Lemma \ref{27c67} and Proposition
\ref{27c33} we have
\begin{align*}
 h(q-1) \leq s_q(\sum_j r_j) \leq  \sum_j s_q(r_j) \leq i \cdot \deg_l(f) < h(q-1),
\end{align*}
contradiction.
Hence we can apply Lemma \ref{27c89} to conclude that $a_i=0$. 

Hence we conclude 
\begin{align*}
\prod_{a \in k} (1-f(a)T) &\equiv 1 \pmod{T^{\lceil \frac{h(q-1)}{\deg_l(f)} \rceil}}.
\end{align*}
Similarly, for the identity function of $l$-degree $1$, we conclude
\begin{align*}
\prod_{a \in k} (1-aT) &\equiv 1 \pmod{T^{\lceil \frac{h(q-1)}{\deg_l(f)} \rceil}}.
\end{align*}
Combining this gives:
\begin{align*}
 \prod_{a \in k \setminus f(k)} (1-aT) &= \frac{\prod_{a \in k}(1-aT)}{\prod_{b \in f(k)}(1-bT)} \\
&\equiv \frac{\prod_{a \in k}(1-aT)}{\prod_{b \in f(k)}(1-bT)} \cdot \prod_{c \in k} (1-f(c)T) \pmod{T^{\lceil \frac{h(q-1)}{\deg_l(f)}
\rceil}}\\
&\equiv \prod_{b \in f(k)} (1-bT)^{\#f^{-1}(b)-1} \pmod{T^{\lceil \frac{h(q-1)}{\deg_l(f)} \rceil}}.
\end{align*}
Note that the polynomials $\prod_{a \in k \setminus f(k)} (1-aT)$ and $\prod_{b \in f(k)} (1-bT)^{\#f^{-1}(b)-1}$ have degree bounded by $s=k
\setminus f(k)$ and are
different since $s \geq 1$. But this implies that $s \geq \frac{h(q-1)}{\deg_l(f)}$.
\end{proof}

\begin{remark}
Different $l$ in Theorem \ref{27c18} may give different lower bounds.
\end{remark}

\section{Examples}

In this section we will give examples which meet the bound from Theorem \ref{27c14}.

\begin{example}[$n=\deg(f)$]
Let $l$ be a finite field of cardinality $q$. In this example we will show that for $n, d \in \Z_{\geq 2}$ there are functions $f_1,\ldots,f_n \in
l[x_1,\ldots,x_n]$
such that the maximum of the degrees is equal to $d$ such that the induced map $f \colon l^n \to l^n$ satisfies $|l^n \setminus f(l^n)|=\frac{n(q-1)}{d}=q-1$.
For $i=1,\ldots,n-1$ set $f_i=x_i$. Let $l_{n-1}$ be the unique extension of $l$ of degree $n-1$. Let $v_1,\ldots,v_{n-1}$ be a basis of $l_{n-1}$ over $l$.
Then $g=\Norm_{l_{n-1}/l}(x_1v_1+\ldots+x_{n-1} v_{n-1})$ is a homogeneous polynomial of degree $n-1$ in $x_1,\ldots,x_{n-1}$. Put $f_n=x_n \cdot g$.
As the norm of a nonzero element is nonzero, one easily sees that $l^n \setminus f(l^n)=\{0\} \times \ldots \{0\} \times l^*$ has cardinality $q-1$. 
\end{example}

\begin{example}[$n=\frac{\deg(f)}{q-1}$]
 Let $l$ be a finite field and let $n \in \Z_{\geq 1}$. Let $f_1,\ldots,f_n \in l[x_1,\ldots,x_n]$ such that the combined map $f \colon
l^n \to l^n$ satisfies $|l^n \setminus f(l^n)|=1$ (Lemma \ref{27c71}). From Theorem \ref{27c14} and the upper bound $n(q-1)$ for the degree we deduce that $\deg(f)=n(q-1)$.
\end{example}


\begin{thebibliography}{MWW12}

\bibitem[MWW12]{WAN3}
Gary~L. Mullen, Daqing Wan, and Qiang Wang.
\newblock Value sets of polynomial maps over finite fields.
\newblock \url{http://arxiv.org/abs/1210.8119}, 2012.
\newblock preprint.

\bibitem[Tur95]{TUR}
Gerhard Turnwald.
\newblock A new criterion for permutation polynomials.
\newblock {\em Finite Fields Appl.}, 1(1):64--82, 1995.

\end{thebibliography}
\end{document}